\documentclass[11pt,a4paper]{amsart}

\usepackage[margin=25truemm,top=30truemm,bottom=30truemm]{geometry}
\usepackage{graphicx}
\usepackage{enumerate}
\usepackage{amsthm}
\usepackage{amssymb}
\usepackage{tikz-cd}
\usepackage{here}
\usepackage{mathtools}
\usepackage{amssymb}
\usepackage{xcolor}

\newtheorem{Prop}{Proposition}[section]
\newtheorem{Thm}[Prop]{Theorem}

\newtheorem{Rem}[Prop]{Remark}

\theoremstyle{definition}
\newtheorem{Def}[Prop]{Definition}
\newtheorem{Ex}[Prop]{Example}

\makeatletter
\@addtoreset{equation}{section}
\makeatother
\subjclass[2020]{53C30, 22E25, 16G20}

\title{Two-step nilpotent Lie algebras obtained by quivers and geometric structures}
\author{Fumika Mizoguchi} 

\date{}
\thanks{This work was supported by JST SPRING, Grant Number JPMJSP2139. This work was partly supported by MEXT Promotion of Distinctive Joint Research Center Program JPMXP0723833165.}

\address[F.~Mizoguchi]{Department of Mathematics, Osaka Metropolitan University, Osaka City, Japan 558-8585} 
\email{sw23882u@st.omu.ac.jp}
\keywords{nilpotent Lie algebras, quivers, pseudo-Riemannian metrics, Ricci-flat, symplectic structures. }

\DeclareMathOperator{\g}{\mathfrak{g}}
\DeclareMathOperator{\n}{\mathfrak{n}}
\DeclareMathOperator{\Ric}{\mathrm{Ric}}

\DeclareMathOperator{\Path}{\mathrm{Path}}

\begin{document}
	\maketitle
	\begin{abstract}
		In the study of geometric structures on nilmanifolds, two-step nilpotent Lie algebras obtained by graphs play an important role. 
Recently, additional examples of nilpotent Lie algebras have been constructed from finite quivers without cycles. 
These latter examples can have arbitrarily high degrees of nilpotency, and admit Riemannian Ricci soliton metrics. 
In this paper, we study nilpotent Lie algebras obtained by finite quivers without cycles that are two-step nilpotent, and we prove that they can also be obtained by graphs. 
Using this relationship, we demonstrate that every two-step nilpotent Lie algebra obtained by a finite quiver without cycles admits a pseudo-Riemannian Ricci-flat metric.
Additionally, we also classify these nilpotent Lie algebras that admit symplectic structures.
	\end{abstract}
	\section{Introduction}
		In geometry, it is an important problem to study whether a given manifold admits special geometric structures. In the context of Lie groups, it is natural to inquire about the existence of left-invariant geometric structures. Namely, it is a natural problem to ask whether a given Lie group admits special left-invariant geometric structures. In general,  the classification of Lie groups with special left-invariant geometric structures is difficult. 
		
		We are interested in left-invariant geometric structures on nilpotent Lie groups.  Examples include special Riemannian metrics such as Einstein or Ricci soliton, special pseudo-Riemannian metrics, and symplectic structures. However, even in the case of two-step nilpotent Lie groups, these structures are not completely understood. 
		
		In the study of geometry of two-step nilpotent Lie groups, one typical example is given by the class of two-step nilpotent Lie algebras obtained by simple directed graphs, as studied by Dani--Mainkar ([DM]), see also Section~\ref{relationship}. Additional examples of nilpotent Lie algebras have been constructed from finite quivers without cycles (\cite{MT}). This method can construct nilpotent Lie algebras with arbitrarily high nilpotency steps, and these Lie algebras admit Ricci solitons (\cite{MT}). In this paper, we focus on the two-step case and prove the following theorem. 
		\begin{Thm}
			All two-step nilpotent Lie algebras obtained by finite quivers without cycles can also be obtained by simple directed graphs. 
		\end{Thm}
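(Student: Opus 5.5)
The plan is to prove the statement by an explicit construction: starting from a quiver $Q$ whose Lie algebra is two-step, build a simple directed graph $G$ and write down an isomorphism between the Lie algebra of $Q$ and the Dani--Mainkar Lie algebra of $G$. Throughout I assume the construction of \cite{MT} in the following form. For a finite quiver $Q$ without cycles, the Lie algebra $\n_Q$ has as basis the set $\Path_{\geq 1}(Q)$ of paths of length at least one. The bracket is the commutator of concatenation: $[p,q]=pq-qp$, where a product of non-composable paths is $0$. Since $Q$ has no cycles, $pq$ and $qp$ cannot both be nonzero, so $[p,q]$ is, up to sign, either a single path or $0$. If the conventions of \cite{MT} differ by signs or orientation, only the signs below change.

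\emph{Step 1: two-step forces short paths.} I will show that if $\n_Q$ is two-step nilpotent, then $Q$ has no path of length $3$. Indeed, if $abc$ is a path, then
\[
[[a,b],c]=[ab,c]=abc\neq 0 .
\]
Here $ba=0$ and $c\,ab=0$, because otherwise $Q$ would contain a cycle. So $\n_Q$ would be at least three-step. Consequently
\[
\n_Q=\mathrm{span}(\text{arrows})\oplus \mathrm{span}(\text{paths of length }2),
\]
and all brackets involving a length-two path vanish.

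\emph{Step 2: the graph.} Define the directed graph $G$ as follows. Its vertex set is the set $Q_1$ of arrows of $Q$. It has an edge $e_{a,b}$ from $a$ to $b$ exactly when $ab$ is a path of length $2$, i.e.\ the head of $a$ is the tail of $b$. I then check that $G$ is simple.
\begin{itemize}
\item There are no loops, since $aa$ being a path would give a cycle.
\item There are no multiple edges, since each ordered pair gives at most one edge.
\item There are no opposite edges $a\to b$ and $b\to a$, since together they would give the cycle $ab$ followed by $ba$.
\end{itemize}
The length-two paths of $Q$ are then in bijection with the edges of $G$ via $ab\leftrightarrow e_{a,b}$.

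\emph{Step 3: the isomorphism.} Define the linear map $\varphi:\n_Q\to\n_G$ by $a\mapsto a$ on arrows and $ab\mapsto e_{a,b}$ on length-two paths. This is a linear isomorphism. To compare brackets, recall the Dani--Mainkar bracket: $[v,w]=e$ if $e$ is an edge from $v$ to $w$, $[w,v]=-e$, and all other brackets of basis elements are $0$. On the $Q$ side, $[a,b]=ab$ exactly when $ab$ is a path, $[a,b]=-ba$ when $ba$ is a path, and $[a,b]=0$ otherwise. By Step 1, brackets with length-two paths vanish on both sides. Hence $\varphi$ preserves brackets on basis elements, and so $\n_Q\cong\n_G$.

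The main obstacle is Step 1 together with matching the exact conventions of \cite{MT}. Here I must make sure of three things: which paths form the basis (excluding trivial paths), the sign convention of the bracket, and that two-step genuinely forbids length-three paths. If the definition in \cite{MT} uses a different basis, I would first reduce to the description above before running Steps 2 and 3.
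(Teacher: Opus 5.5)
Your proof is correct, and it takes a more direct route than the paper.

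\textbf{The paper's route.} It first proves a structure result (Proposition~\ref{2step}). For each vertex $v_i$ of $Q$ with both incoming and outgoing arrows, it collects the in- and out-arrows at $v_i$ into an abstract $(m_i,n_i)$-type quiver $Q_i$. The remaining arrows form an abelian factor. It then shows $\n_Q\simeq(\oplus_i\n_{Q_i})\oplus\mathfrak{a}$, which requires checking that the $E_i$ are disjoint and that cross brackets vanish. Next it identifies each $\n_{Q_i}$ with the Lie algebra of the directed complete bipartite graph $K_{m_i,n_i}$ (Proposition~\ref{mn-type}). Finally it takes the disjoint union of these graphs, adding isolated vertices for $\mathfrak{a}$.

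\textbf{Your route.} You write down the line digraph of $Q$ in one step (arrows as vertices, $a\to b$ iff $t(a)=s(b)$) and verify the brackets directly on basis elements. You also prove that two-step forces the absence of length-$3$ paths, where the paper cites \cite{MT}.

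\textbf{Relation between them.} Your graph is in fact isomorphic to the paper's $\Gamma$. Every edge $a\to b$ lives at the vertex $t(a)=s(b)$. By the no-length-$3$-paths property, no arrow is an in-arrow at one such vertex and an out-arrow at another. Hence the connected components of your $G$ are exactly the complete bipartite digraphs on the in- and out-arrows at these vertices, plus isolated vertices for the arrows of $E_0$.

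\textbf{What each buys.} Your argument is shorter and avoids the decomposition bookkeeping. The paper's route makes the component data $(\#V_i,\#E_i)=(m_i+n_i,\,m_in_i)$ explicit, and it reuses this for the Pouseele--Tirao criterion and the signature remark. With the observation above, that data can also be read off your graph.
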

		
		Moreover, by applying previous researches on two-step nilpotent Lie algebras obtained by graphs, we investigate whether two-step nilpotent Lie algebras obtained by finite quivers without cycles admit certain special geometric structures. Using the theorem by Conti--del Barco--Rossi (\cite{CDR}), we prove the following theorem. 
		\begin{Thm}
			All two-step nilpotent Lie algebras obtained by finite quivers without cycles admit pseudo-Riemannian Ricci-flat metrics. 
		\end{Thm}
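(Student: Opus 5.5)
The plan is to reduce the statement to the first theorem of the introduction and then quote the criterion of Conti--del Barco--Rossi. By that first theorem, every two-step nilpotent Lie algebra $\n$ obtained from a finite quiver without cycles is isomorphic to a Dani--Mainkar Lie algebra $\n_G$ attached to a simple directed graph $G$. Recall its structure: $\n_G = V \oplus W$, where $V$ has a basis indexed by the vertices of $G$, $W$ has a basis indexed by the edges, and $[v_i,v_j]=w_{ij}$ exactly when $ij$ is an edge. Since a pseudo-Riemannian Ricci-flat left-invariant metric is transported by Lie algebra isomorphisms, it suffices to treat the graph Lie algebras $\n_G$ that actually arise this way.

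Next I would recall the result of \cite{CDR} that will be invoked. I expect it to state that the nilpotent Lie algebra attached to any (simple) graph admits a pseudo-Riemannian Ricci-flat metric, possibly with an explicit construction: a metric for which the Ricci tensor vanishes, built by pairing $V$ and $W$ suitably, or by giving an ordering of the basis that realizes a nice basis with a specific signature. If their theorem holds for all graphs, the proof is immediate from the two steps above.

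The main obstacle is whether \cite{CDR} applies to every graph or only under hypotheses, for instance graphs without isolated vertices or two-step algebras that are not abelian. Isolated vertices only contribute abelian direct factors $\mathbb{R}^k$. Given a Ricci-flat metric $g$ on the remaining part, the product metric $g \oplus g_0$, with $g_0$ any flat inner product on $\mathbb{R}^k$, is again Ricci-flat. The Ricci tensor of an orthogonal direct sum of Lie algebras splits, and abelian factors are flat, so this case poses no difficulty. If instead a combinatorial condition on the graph is needed, I would check it for the graphs produced in the proof of the first theorem, using their explicit description in terms of the paths of length two in the quiver.
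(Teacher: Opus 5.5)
Your proposal is correct and follows the same route as the paper: use Theorem~\ref{quiver_graph} to replace $\n_Q$ by an isomorphic graph Lie algebra $\n_\Gamma$, then apply the Conti--del Barco--Rossi theorem (Theorem~\ref{CDR}), which as the paper cites it holds for every simple directed graph. Your fallback arguments are therefore unnecessary, though the one about isolated vertices is valid, since an orthogonal direct sum of a Ricci-flat factor with a flat abelian factor is again Ricci-flat.
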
 
		Next, we consider symplectic structures. According to the research by Pouseele--Tirao (\cite{PT}), nilpotent Lie algebras obtained by graphs do not always admit symplectic structures. Furthermore, they posed the condition whether these Lie algebras admit symplectic structures or not. Applying their result, we prove the following theorem. 
		\begin{Thm}
			A two-step nilpotent Lie algebra obtained by a finite quiver without cycles admits a symplectic structure if and only if the dimensions of the Lie algebra is even and each connected component in the quiver is one of the $(2, 2)$, $(1, n)$, $(n, 1)$ and $(1, 0)$-type quivers.  
		\end{Thm}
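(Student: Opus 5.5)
We will reduce the statement to the result of Pouseele--Tirao (\cite{PT}) on graph Lie algebras by means of the first theorem above. Fix a finite quiver $Q$ without cycles such that the associated nilpotent Lie algebra $\n(Q)$ is two-step nilpotent. The first step is to make the correspondence of the first theorem explicit. We will identify, in terms of $Q$, the simple directed graph $G(Q)$ with $\n(Q)\cong\n_{G(Q)}$. The vertices of $G(Q)$ should correspond to the generating arrows of $Q$. The edges should correspond to the paths of length two, which span the centre-like part $[\n(Q),\n(Q)]$. We will also check that connected components of $Q$ correspond to connected components of $G(Q)$, up to isolated vertices, which contribute abelian summands. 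Then $\n(Q)$ decomposes as a direct sum of the Lie algebras of the components, compatibly on both sides.

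The second step is to classify the possible connected components of $Q$ under the two-step hypothesis. Since $\n(Q)$ is two-step, $Q$ has no path of length three. We expect that each connected component is either a single arrow, which is the $(1,0)$-type, or is built around vertices through which length-two paths pass. Using the explicit description from step one, we will compute the graph $G(Q_i)$ for each component type. In particular, a $(p,q)$-type component, with $p$ arrows entering and $q$ arrows leaving a middle vertex, should give the complete bipartite graph $K_{p,q}$ on the arrows. For general components we will describe $G(Q_i)$ as a union of such bipartite pieces glued along shared arrows.

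The third step is to apply the criterion of \cite{PT}. That criterion characterizes symplectic $\n_G$ by the even-dimensionality of $\n_G$ together with a restriction on the connected components of $G$. We will translate this restriction through the dictionary of steps one and two. The components allowed by \cite{PT} should correspond exactly to $G(Q_i)$ coming from the $(2,2)$, $(1,n)$, $(n,1)$ and $(1,0)$-type quivers. For the reverse direction, given even total dimension and components of these types, the criterion of \cite{PT} applied to $G(Q)$ yields a symplectic form directly. As a sanity check, we can also write symplectic forms on the building blocks by hand: on $\n_{K_{1,n}}$ and $\n_{K_{2,2}}$, odd-dimensional blocks being paired with one another or with abelian summands.

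The main obstacle is the combinatorial translation in the third step. Many non-isomorphic quivers could a priori produce graphs satisfying the condition of \cite{PT}. We must show that any component not of the four listed types, for example one with a vertex having $p\ge 2$ incoming and $q\ge 3$ outgoing arrows, or two middle vertices sharing arrows, yields a graph component violating the \cite{PT} condition. We will handle this by locating in $G(Q_i)$ a forbidden configuration, such as a $K_{2,3}$ or a long path glued to a bipartite piece, and then invoking the corresponding obstruction in \cite{PT}.
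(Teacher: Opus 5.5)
\textbf{Verdict: genuine gap.} Your route is the paper's: realize $\n_Q$ as $\n_{G(Q)}$, with vertices the arrows and edges the length-two paths, then apply Theorem~\ref{PT}. Your $G(Q)$ is exactly the paper's $\Gamma$. The gap is the step you call the main obstacle. You never determine the components of $G(Q)$, and you expect ``bipartite pieces glued along shared arrows'' or ``two middle vertices sharing arrows''.

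The missing lemma uses the length-two hypothesis once more. Call $v$ a middle vertex if it has both incoming and outgoing arrows. Then no arrow into a middle vertex starts at a middle vertex, and no arrow out of a middle vertex ends at one, since either would create a path of length three. Hence each arrow meets at most one middle vertex; this is the disjointness of the $E_i$ used in Proposition~\ref{2step}. So every component of $G(Q)$ is either an isolated vertex, or exactly the complete bipartite graph $K_{m,n}$ formed by the $m$ arrows entering and the $n$ arrows leaving a single middle vertex. The configurations you planned to rule out (shared arrows, two middle vertices sharing arrows, a long path glued to a bipartite piece) never occur.

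Moreover, Theorem~\ref{PT} is a counting criterion, not a list of forbidden configurations: a component is admissible iff $\#E\le\#V$. There is no ``corresponding obstruction'' to invoke for a $K_{2,3}$. You would need the extra remark that $\#E-\#V$ cannot decrease when a connected subgraph is enlarged to its component. With the lemma none of this is needed, and the check is one line, as in the paper: $mn\le m+n\iff(m-1)(n-1)\le 1$, i.e.\ $m=1$, $n=1$ or $m=n=2$. An isolated vertex passes trivially.

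Finally, your claim that components of $Q$ match components of $G(Q)$ up to isolated vertices is false. Take the connected quiver with arrows $u\to v_1\to w$ and $u\to v_2\to w$. It has length two, yet $G(Q)$ is two disjoint edges. Here $\n_Q$ is symplectic by Theorem~\ref{PT}, although $Q$ is connected and of none of the listed types. What you must track are the stars $Q_i$ of Proposition~\ref{2step}, and ``connected component'' in the statement has to be read in that sense. The paper's proof is loose on this point too.
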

		We refer to Section~\ref{relationship} for the definition of the $(m, n)$-type quiver. In this paper, we focus only on two-step nilpotent Lie algebras obtained from finite quivers without cycles. In this context, we prove that all of them admit pseudo-Riemannian Ricci-flat metrics, and classify those that admit symplectic structures. While higher-step cases remain unclear, we are interested in studying them further. The results presented in this paper will be useful for future research. 
	\section{Preliminaries}
			In this section, referring to \cite{MT}, we recall the method of constructing Lie algebras obtained by quivers. In particular, from finite quivers without cycles, one obtains finite-dimensional nilpotent Lie algebras.  We recall some basic notions on quivers. A quiver is a directed graph where loops and multiple arrows between two vertices are allowed. The precise definition is given as follows. 
			\begin{Def}
				A {\it quiver} $Q=(V, E, s, t)$ is a quadruple consisting of two sets $V$, $E$, and two maps $s, t: E\to V$. Elements of $V$ and $E$ are called {\it vertices} and {\it arrows}, respectively. For an arrow $\alpha\in E$, the vertices $s(\alpha)$ and $t(\alpha)$ are called the {\it source} and the {\it target} of $\alpha$, respectively.
			\end{Def}
			Quivers consist of vertices and arrows. Note that the notion of paths is also important, which is defined as follows. 
			\begin{Def}
				Let $Q=(V, E, s, t)$ be a quiver. A {\it path of length $m$} is a sequence $\alpha_1\alpha_2\cdots \alpha_m$, where $\alpha_1, \ldots, \alpha_m\in E$ and they satisfy $t(\alpha_i)=s(\alpha_{i+1})$ for $i=1,\ldots, m-1$.
			\end{Def}
			\begin{Ex}\label{Ex_quiver}
				The following gives a quiver: $V=\{v_1,v_2,v_3,v_4,v_5\},E=\{a,b,c,d,e\}$, $s(a)=v_1$, $s(b)=v_2$, $s(c)=v_3$, $s(d)=v_3$, $s(e)=v_4$, $t(a)=v_2$, $t(b)=v_4$, $t(c)=v_4$, $t(d)=v_4$, $t(e)=v_5$. It is illustrated as in Figure \ref{quiver}. Paths of this quiver are $a$, $b$, $c$, $d$, $e$, $ab$, $be$, $ce$, $de$, and $abe$. 
			\end{Ex}
			\begin{figure}[H]
				\begin{center}
					\includegraphics[scale=0.8]{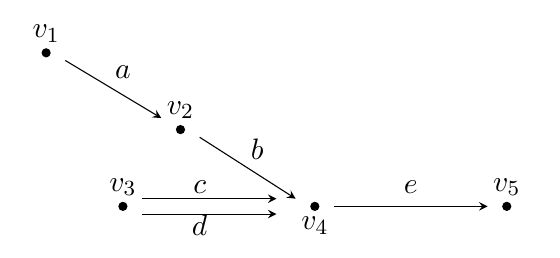}
					\caption{The quiver in Example \ref{Ex_quiver}}
					\label{quiver}
				\end{center}
			\end{figure}
			For a path $x=\alpha_1\alpha_2\cdots \alpha_m$ in a quiver $Q$, the vertices $s(\alpha_1)$ and $t(\alpha_m)$ are also called the {\it source} and the {\it target} of $x$, and denoted by $s(x)$ and $t(x)$ respectively.  In this way, the maps $s, t: E\to V$ can be extended to $s, t: \Path(Q)\to V$, where $\Path(Q)$ is the set of all paths in $Q$. A path whose source and target coincide is called a {\it cycle}. A quiver is said to be {\it finite} if the set of vertices and the set of arrows are both finite. 
			\begin{Def}[\cite{MT}]
				Let $Q=(V, E, s, t)$ be a quiver. The {\it path algebra} $\n_Q$ is defined as an $\mathbb{R}$-vector space having $\Path(Q)$ as basis with the following product: it is bilinear, and the product between two paths $x$ and $y$ is defined by the concatenation, that is 
				\[
            				x\cdot y=
            				\begin{cases}
            					xy & (\text{if }xy \text{ is a path}), \\
            					0 & (\text{otherwise}).
            				\end{cases}
            			\]
				The space $\n_Q$ equipped with the bracket product $[x, y]\coloneqq x\cdot y-y\cdot x$ is called the {\it Lie algebra obtained by a quiver}. 
			\end{Def}
			For the path algebras $(\n_Q, \cdot)$, we refer to \cite{ASS}. Note that path algebras usually contain vertices as paths of length 0. However, we have to note that the Lie algebras $(\n_Q, [,])$ do not contain vertices, that is, we only consider paths of length $\geq1$. We refer to \cite{MT} for examples of nilpotent Lie algebras obtained by quivers. 
			\begin{Prop}[\cite{MT}]
				Let $Q$ be a finite quiver without cycles. Then, there exists the maximum of the lengths of paths, which is called the {\it length of a quiver}. Moreover, if $Q$ is a quiver of length $m$ then the Lie algebra $\n_Q$ is an $m$-step nilpotent Lie algebra. 
			\end{Prop}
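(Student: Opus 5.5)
Two things have to be shown: the maximum path length exists, and $\n_Q$ has nilpotency step exactly $m$. The plan is to grade $\n_Q$ by path length and read both facts off that grading.

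\emph{Finiteness of the length.} Let $N=|V|$ and suppose $\alpha_1\cdots\alpha_k$ is a path with $k\geq N$. It visits the $k+1>N$ vertices
\[
s(\alpha_1),\ t(\alpha_1),\ \ldots,\ t(\alpha_k),
\]
so by pigeonhole two of them coincide, say $t(\alpha_i)=t(\alpha_j)$ with $i<j$, or $s(\alpha_1)=t(\alpha_j)$. The subpath $\alpha_{i+1}\cdots\alpha_j$ (respectively $\alpha_1\cdots\alpha_j$) is then a cycle, a contradiction. Hence every path has length at most $N-1$. Since $E$ is finite, $\Path(Q)$ is finite, $\n_Q$ is finite-dimensional, and the maximum length $m$ exists.

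\emph{The Lie bracket.} Concatenation is associative, so $(\n_Q,\cdot)$ is an associative algebra. The commutator bracket of an associative algebra is therefore a Lie bracket.

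\emph{Nilpotency step.} Let $\n^{(k)}$ be the span of paths of length $\geq k$. Since the product of a path of length $a$ and a path of length $b$ is either $0$ or a path of length $a+b$, we have $[\n^{(a)},\n^{(b)}]\subset\n^{(a+b)}$. Inductively the lower central series satisfies $C^k\n_Q\subset\n^{(k)}$, where $C^1=\n_Q$ and $C^{k+1}=[\n_Q,C^k]$. As $\n^{(m+1)}=0$, the algebra $\n_Q$ is nilpotent of step at most $m$.

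For the reverse inequality, take a path $x=\alpha_1\cdots\alpha_m$ of maximal length. I will show that
\[
[\alpha_1,[\alpha_2,\ldots,[\alpha_{m-1},\alpha_m]\ldots]]=\pm x\neq 0.
\]
Set $y_j=[\alpha_j,[\ldots,[\alpha_{m-1},\alpha_m]]]$ and argue by downward induction on $j$ that $y_j=\pm\,\alpha_j\cdots\alpha_m$. Assume $y_{j+1}=\pm\,\alpha_{j+1}\cdots\alpha_m$. Then
\[
[\alpha_j,y_{j+1}]=\pm\bigl(\alpha_j\cdot(\alpha_{j+1}\cdots\alpha_m)-(\alpha_{j+1}\cdots\alpha_m)\cdot\alpha_j\bigr).
\]
The first product is the path $\alpha_j\cdots\alpha_m$. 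The second product would be nonzero only if $t(\alpha_m)=s(\alpha_j)$, and then $\alpha_j\cdots\alpha_m$ would be a cycle, which is excluded. So the second term vanishes and $y_j=\pm\,\alpha_j\cdots\alpha_m$.

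This is the step I expect to be the only subtle point: showing that the reverse terms vanish, which is exactly where the absence of cycles is used. Taking $j=1$ gives $C^m\neq0$, and together with $C^{m+1}=0$ the algebra $\n_Q$ is exactly $m$-step nilpotent.
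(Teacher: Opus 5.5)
Your proof is correct and complete. The paper gives no proof of this proposition; it is quoted from \cite{MT}, so there is no in-paper argument to compare routes with.

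Your argument is the natural self-contained one behind that citation:
\begin{enumerate}[(i)]
\item pigeonhole on the vertices visited by a path shows every path has length at most $\#V-1$;
\item the length filtration $\n^{(k)}$ gives $C^{k}\n_Q\subset\n^{(k)}$, hence $C^{m+1}\n_Q=0$;
\item the iterated bracket $[\alpha_1,[\alpha_2,\ldots,[\alpha_{m-1},\alpha_m]\ldots]]$ along a maximal path gives $C^m\n_Q\neq 0$.
\end{enumerate}
You correctly identify that absence of cycles, not maximality of $m$, is what kills each reversed product in (iii).

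Two cosmetic remarks. First, every sign in your downward induction is actually $+$, so $y_1=x$ exactly. Second, if $E=\emptyset$ there are no paths of positive length. In that case the existence of the maximum tacitly requires $E\neq\emptyset$, or the convention that the length is $0$ and $\n_Q=0$.
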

			Therefore, to study the case that $\n_Q$ is of two-step, we should consider quivers of length $2$. 			
	\section{A relationship between nilpotent Lie algebras obtained by quivers and by graphs}\label{relationship}
			In this section, we introduce a relationship between nilpotent Lie algebras obtained by quivers of length $2$ and graphs. First, we recall nilpotent Lie algebras obtained by graphs. A directed simple graph is an ordered pair $\Gamma=(V, E)$ consisting of a set of vertices $V$, and a set of edges $E\subset\{(x, y)\in V\times V\mid x\neq y\}$. 
			For $e=(x, y)\in E$, the vertex $x$ is called the {\it source} of $e$ and $y$ is called the {\it target} of $e$. 
			\begin{Def}[\cite{DM}]
				Let $\Gamma=(V, E)$ be a simple directed graph. Let $\n_\Gamma$ be the $\mathbb{R}$-vector space having the set $V \cup E$ as basis. The space $\n_\Gamma$ becomes a two-step nilpotent Lie algebra with Lie bracket given by 
				\[
                        			[x,y]=
                        			\begin{cases}
                        				e & (x,y\in V,e=(x,y)), \\
                        				-e & (x,y\in V,e=(y,x)), \\
                        				0 & (\textrm{otherwise}).
                        			\end{cases}
            			\]	
			\end{Def}
			Second, we prove that for each two-step nilpotent Lie algebra $\n_Q$ obtained by a quiver $Q$, there exists a simple directed graph $\Gamma$ such that $\n_Q$ is isomorphic to $\n_\Gamma$. 
			Note that the following quiver in Figure~\ref{quiver_2step} is called the {\it $(m, n)$-type quiver}. 
			\begin{figure}[H]
				\begin{center}
					\includegraphics{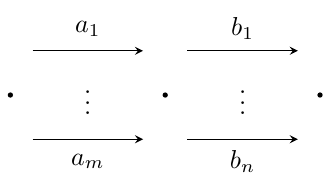}
					\caption{The $(m, n)$-type quiver}
					\label{quiver_2step}
				\end{center}
			\end{figure}
			\begin{Prop}\label{2step}
				Let $Q=(V, E, s, t)$ be a quiver of length $2$. Then, there exist some $(m_i, n_i)$-type quivers $Q_i$~$(1\leq i\leq r)$ and an abelian Lie algebra $\mathfrak{a}$ such that $\n_Q\simeq(\oplus_{i=1}^r\n_{Q_i})\oplus\mathfrak{a}$. 
			\end{Prop}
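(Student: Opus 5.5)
The plan is to decompose $\n_Q$ directly by looking at which pairs of basis elements have nonzero bracket, using the fact that $Q$ has length $2$. I take the $(m,n)$-type quiver to be the quiver with a single middle vertex $v$, $m$ arrows $a_1,\dots,a_m$ with target $v$, and $n$ arrows $b_1,\dots,b_n$ with source $v$ (so its paths of length $2$ are exactly the $a_ib_j$). The basis of $\n_Q$ is $\Path(Q)=E\cup P_2$, where $P_2$ is the set of paths of length $2$, since there are no longer paths.

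First I determine the brackets. For $x,y\in\Path(Q)$, $x\cdot y\neq 0$ forces $x,y\in E$ with $t(x)=s(y)$. Moreover, $x\cdot y$ and $y\cdot x$ cannot both be nonzero, since otherwise $xyx$ would be a path of length $3$. Hence $[x,y]=xy$ when $x,y$ are composable arrows, $[x,y]=-yx$ in the reverse case, and $[x,y]=0$ otherwise. In particular every element of $P_2$ is central.

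Next, call a vertex $v$ a \emph{middle vertex} if $\mathrm{In}(v)=\{\alpha\in E: t(\alpha)=v\}$ and $\mathrm{Out}(v)=\{\beta\in E: s(\beta)=v\}$ are both nonempty. The key combinatorial step is that for distinct middle vertices $v\neq w$, the sets $\mathrm{In}(v)\cup\mathrm{Out}(v)$ and $\mathrm{In}(w)\cup\mathrm{Out}(w)$ are disjoint.
\begin{itemize}
\item $\mathrm{In}(v)\cap\mathrm{In}(w)=\emptyset$ and $\mathrm{Out}(v)\cap\mathrm{Out}(w)=\emptyset$ trivially.
\item If $\alpha\in\mathrm{Out}(v)\cap\mathrm{In}(w)$, choose $\gamma\in\mathrm{In}(v)$ and $\delta\in\mathrm{Out}(w)$. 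Then $\gamma\alpha\delta$ is a path of length $3$, a contradiction.
\end{itemize}
Let $v_1,\dots,v_r$ be the middle vertices, with $m_i=|\mathrm{In}(v_i)|$ and $n_i=|\mathrm{Out}(v_i)|$. Define $\n_i$ as the span of $\mathrm{In}(v_i)\cup\mathrm{Out}(v_i)$ together with all paths $\alpha\beta$ with $t(\alpha)=s(\beta)=v_i$. Every path of length $2$ passes through exactly one middle vertex, so it lies in exactly one $\n_i$. Let $\mathfrak a$ be the span of the arrows lying in no $\mathrm{In}(v_i)\cup\mathrm{Out}(v_i)$. Then, as vector spaces, $\n_Q=(\oplus_i\n_i)\oplus\mathfrak a$.

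Finally I check that this is a Lie algebra direct sum.
\begin{itemize}
\item An arrow in $\mathfrak a$ is composable with no arrow: composability of $\alpha$ then $\beta$ makes $t(\alpha)$ a middle vertex containing both. So $\mathfrak a$ is central, hence an abelian ideal.
\item For $i\neq j$, $[\n_i,\n_j]=0$. A composable pair of arrows lies in the arrow set of the unique middle vertex $t(\alpha)$, and the length-$2$ paths are central.
\item Each $\n_i$ is closed under the bracket. Sending $\mathrm{In}(v_i)\to\{a_k\}$ and $\mathrm{Out}(v_i)\to\{b_l\}$ bijectively, and $\alpha\beta\mapsto a_kb_l$, gives an isomorphism $\n_i\simeq\n_{Q_i}$ with $Q_i$ the $(m_i,n_i)$-type quiver. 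This holds because the bracket formulas of the first step are identical on both sides.
\end{itemize}
Thus $\n_Q\simeq(\oplus_{i=1}^r\n_{Q_i})\oplus\mathfrak a$.

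The only genuinely delicate point is the disjointness argument in the second step, which rules out sharing of arrows between middle vertices via the no-length-$3$-path condition. Everything else is bookkeeping on the basis.
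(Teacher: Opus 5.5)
Your proof is correct and is essentially the paper's argument: your middle vertices are the paper's $V'$, your sets $\mathrm{In}(v_i)\cup\mathrm{Out}(v_i)$ and the leftover arrows are its $E_i$ and $E_0$, and you check the same vanishing of brackets. Your explicit length-$3$-path argument for the disjointness of the arrow sets usefully spells out what the paper attributes only to the definition of $V'$; both versions also implicitly use that $\mathrm{In}(v_i)\cap\mathrm{Out}(v_i)=\emptyset$, which holds because $Q$ has no loops.
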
			
			\begin{proof}
				Let us consider the following subset of $V$, which is a finite set: 
				\[
					V'\coloneqq\{v\in V\mid \exists a, b\in E \text{ s.t. } t(a)=s(b)=v\}=\{v_1, \ldots, v_r\}. 
				\]
				First, for each $i\in\{1, \ldots, r\}$, we consider a quiver $Q_i=(V_i, E_i, s_i, t_i)$ defined as follows: 
				\begin{align*}
					E_{i, 1}&\coloneqq\{a\in E\mid t(a)=v_i\}, \\
					E_{i, 2}&\coloneqq\{b\in E\mid s(b)=v_i\}, \\
					E_i&\coloneqq E_{i,1}\cup E_{i, 2}. 
				\end{align*}
				A set $V_i$ and maps $s_i$, $t_i$ are naturally determined from $Q$, but strictly speaking, they are defined as follows: 
				\begin{align*}
					V_i&\coloneqq\{v_i, w_{i,1}, w_{i, 2}\}, \\
					s_i(x)&\coloneqq
					\begin{cases}
						w_{i, 1} & (x\in E_{i, 1}), \\
						v_i & (x\in E_{i, 2}), 
					\end{cases} \\
					t_i(x)&\coloneqq
					\begin{cases}
						v_i & (x\in E_{i, 1}), \\
						w_{i, 2} & (x\in E_{i, 2}). 
					\end{cases}
				\end{align*}						
				Moreover, we put $m_i=\#E_{i,1}$ and $n_i=\#E_{i, 2}$. Then, each quiver $Q_i$ is of $(m_i, n_i)$-type. Note that any $E_i$ and $E_j$ are disjoint if $i\neq j$, by the definition of $V'$. We put $E_0\coloneqq E\setminus \bigcup_{i=1}^r E_i$. Then, the set $E$ can be decomposed into disjoint subsets $E_0$, $E_1, \ldots, E_r$. 
				
				Second, we define an abelian Lie algebra $\mathfrak{a}$ by 
					\[
						\mathfrak{a}\coloneqq\mathbb{R}^{\#E_0}. 
					\]
					
				Finally, we prove that it satisfies $\n_Q\simeq(\oplus_{i=1}^r\n_{Q_i})\oplus\mathfrak{a}$. The set $\Path(Q)$ is decomposed as follows: 
				\begin{align*}
					\Path(Q)&=\textstyle{E_0\sqcup\left(\bigsqcup_{i=1}^r(E_{i,1}\cup E_{i, 2})\right)\sqcup\left(\bigsqcup_{i=1}^r \{ab\in\Path(Q)\mid a\in E_{i, 1}, b\in E_{i, 2}\}\right)} \\
					&=\textstyle{E_0\sqcup\left(\bigsqcup_{i=1}^r\Path(Q_i)\right)}. 
				\end{align*}
				Therefore, $\n_Q$ is isomorphic to $(\oplus_{i=1}^r\n_{Q_i})\oplus\mathfrak{a}$ as a vector space. It remains to show that these spaces are isomorphic as a Lie algebra, that is, $[\mathrm{span}~E_0, \Path(Q)]=0$ and $[\Path(Q_i), \Path(Q_j)]=0$ if $i\neq j$. 
				
				Take $\alpha\in E_0$ and $x\in\Path(Q)$. Assume that $[\alpha, x]\neq 0$, that is $t(\alpha)=s(x)$ or $t(x)=s(\alpha)$. In the case of $t(\alpha)=s(x)$, it satisfies $t(\alpha)=s(x)\in V'$ by the definition of $V'$. Therefore, $\alpha\in \bigcup_{i=1}^rE_i$, which is a contradiction. One can show the case of $t(x)=s(\alpha)$ similarly. 
				
				Next, take $x\in\Path(Q_i)$ and $y\in\Path(Q_j)$. Assume that $i\neq j$ and $[x, y]\neq0$, that is $t(x)=s(y)$ or $t(y)=s(x)$. If $x$ or $y$ are paths of length $2$, $[x, y]=0$ holds, since $Q$ is a quiver of length $2$. Therefore, $x\in E_i$ and $y\in E_j$. In the case of $t(x)=s(y)$, it holds $y\in E_i$ by $t(x)=s(y)=v_i$. This is a contradiction, since $E_i$ and $E_j$ are disjoint if $i\neq j$. One can show the case of $t(y)=s(x)$ similarly, which completes the proof. 
			\end{proof}
			Let $Q$ be the $(m, n)$-type quiver and $\n_Q$ be the two-step nilpotent Lie algebra obtained by $Q$. Then, we would like to obtain $\n_Q$ from a graph. A {\it complete bipartite graph} is a special graph where every vertex of the first set is connected to every vertex of the second set. Namely, it is a graph of the form $\Gamma=(V_1\sqcup V_2, E)$ with $E=V_1\times V_2$ as the following graph in Figure~\ref{graph}. The directed graph $\Gamma$ is called the {\it directed complete $(m, n)$ bipartite graph} if $\#V_1=m$ and $\#V_2=n$. 
			\begin{figure}[H]
				\begin{center}
					\includegraphics[scale=0.8]{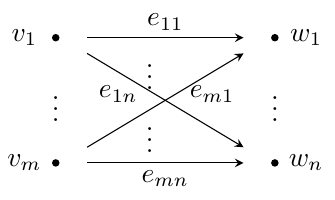}
					\caption{The directed complete $(m, n)$ bipartite graph}
					\label{graph}
				\end{center}
			\end{figure}
			\begin{Prop}\label{mn-type}
				Let $Q$ be the $(m, n)$-type quiver and $\Gamma=(V, E)$ be the directed complete $(m, n)$ bipartite graph. Then, it satisfies $\n_Q\simeq\n_\Gamma$. In particular, it satisfies $\#V=m+n$ and $\#E=mn$. 
			\end{Prop}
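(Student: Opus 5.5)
The plan is to write down an explicit linear isomorphism that matches the natural bases of the two Lie algebras and then check that it preserves brackets. Label the $(m,n)$-type quiver $Q$ as in the proof of Proposition~\ref{2step}. It has vertices $w_1, v, w_2$, arrows $a_1,\dots,a_m$ with $s(a_i)=w_1$ and $t(a_i)=v$, and arrows $b_1,\dots,b_n$ with $s(b_j)=v$ and $t(b_j)=w_2$. Since $Q$ has no cycles and has length $2$, $\Path(Q)$ consists exactly of the $m+n$ arrows together with the $mn$ paths $a_ib_j$. Hence $\dim\n_Q=m+n+mn$.

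For the graph, take $V=V_1\sqcup V_2$ with $V_1=\{x_1,\dots,x_m\}$, $V_2=\{y_1,\dots,y_n\}$ and $E=V_1\times V_2$. Then $\#V=m+n$ and $\#E=mn$, which is the ``in particular'' part of the statement. It also gives $\dim\n_\Gamma=m+n+mn$. Define the linear map $\varphi:\n_Q\to\n_\Gamma$ on bases by $\varphi(a_i)=x_i$, $\varphi(b_j)=y_j$ and $\varphi(a_ib_j)=(x_i,y_j)$. This is a bijection of bases, so $\varphi$ is a linear isomorphism.

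It remains to check that $\varphi[p,q]=[\varphi p,\varphi q]$ for all basis paths $p,q$. I will split this into cases according to the types of $p$ and $q$.

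\begin{enumerate}[(i)]
\item $p=a_i$, $q=b_j$. Then $a_i\cdot b_j=a_ib_j$, and $b_j\cdot a_i=0$ because $t(b_j)=w_2\neq w_1=s(a_i)$. So $[a_i,b_j]=a_ib_j$. On the graph side, $[x_i,y_j]=(x_i,y_j)$ since $(x_i,y_j)\in E$. The two sides agree, and the case $p=b_j$, $q=a_i$ follows by antisymmetry.
\item $p=a_i$, $q=a_k$. Both products vanish because $t(a_i)=v\neq w_1=s(a_k)$. On the graph side, $[x_i,x_k]=0$ since $E$ contains no edges inside $V_1$. The case $p=b_j$, $q=b_l$ is the same with $V_2$ in place of $V_1$.
\item At least one of $p,q$ is a length-$2$ path $a_ib_j$. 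Any product involving it would be a path of length at least $3$, which does not exist, so the bracket is $0$. On the graph side, edges are central in $\n_\Gamma$ by definition, so the bracket is also $0$.
\end{enumerate}

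The only point needing care is case (i): the reverse concatenation must vanish so that no extra term appears. This holds because the three vertices $w_1$, $v$, $w_2$ of $Q$ are distinct. Once that is checked, the cases cover every pair of basis paths, so $\varphi$ is a Lie algebra isomorphism and $\n_Q\simeq\n_\Gamma$.
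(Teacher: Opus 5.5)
Your proof is correct and takes the same approach as the paper. Both match the bases $a_i, b_j, a_ib_j$ with the vertices and edges of the bipartite graph and observe that the only nonzero brackets are $[a_i,b_j]=a_ib_j$ and $[x_i,y_j]=(x_i,y_j)$. Your case analysis, including that $b_j\cdot a_i=0$ and that length-two paths and edges are central, fills in the details the paper leaves as ``easy to see.''
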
 
			\begin{proof}
				The two-step nilpotent Lie algebra $\n_Q$ is given by
				\[
					\n_Q=\mathrm{span}\{a_i, b_j, a_ib_j\mid 1\leq i\leq m, 1\leq j\leq n\}, 
				\]
				where the Lie brackets are given by $[a_i, b_j]=a_ib_j$. Also, the two-step nilpotent Lie algebra $\n_\Gamma$ is
				\[
					\n_\Gamma=\mathrm{span}\{v_i, w_j, e_{ij}=(v_i, w_j)\mid  1\leq i\leq m, 1\leq j\leq n\}
				\]
				where the Lie brackets are given by $[v_i, w_j]=e_{ij}$. Therefore, it is easy to see $\n_Q\simeq\n_\Gamma$ as Lie algebras. 
			\end{proof}
			\begin{Rem}\label{(1,0)}
				Note that the following quiver in Figure~\ref{(1,0)-type} is called the {\it $(1, 0)$-type quiver}. The Lie algebra obtained by this quiver is abelian. Then, for the $(1, 0)$-type quiver, Propositions~\ref{mn-type} also hold. Namely, the nilpotent Lie algebra obtained by the $(1, 0)$-type quiver is isomorphic to the nilpotent Lie algebra obtained by the graph $\Gamma=(\{v\}, \phi)$. In Proposition~\ref{2step}, an abelian Lie algebra $\mathfrak{a}$ is obtained by the union of $(1, 0)$-type quivers. Therefore, any nilpotent Lie algebra obtained by a quiver of length $2$ is isomorphic to the direct sum of nilpotent Lie algebras obtained by $(m_i, n_i)$-type quivers, where $m_i\geq1$ and $n_i\geq0$. 			
			\end{Rem}
			\begin{figure}[H]
				\begin{center}
					\includegraphics[scale=0.7]{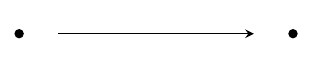}
					\caption{$(1, 0)$-type}
					\label{(1,0)-type}
				\end{center}
			\end{figure}
			\begin{Thm}\label{quiver_graph}
				Let $Q$ be a quiver of length $2$. Then, $\n_Q$ is also obtained by a graph, that is, there exists a directed simple graph $\Gamma$ such that $\n_Q\simeq\n_\Gamma$.
			\end{Thm}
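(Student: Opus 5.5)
The plan is to combine Proposition~\ref{2step}, Proposition~\ref{mn-type} and Remark~\ref{(1,0)} with the elementary observation that the graph construction turns disjoint unions into direct sums. Concretely, for simple directed graphs $\Gamma_1=(V_1,E_1),\ldots,\Gamma_k=(V_k,E_k)$ with pairwise disjoint vertex sets, let $\Gamma=\bigsqcup_i\Gamma_i$ be the graph with vertex set $\bigsqcup_i V_i$ and edge set $\bigsqcup_i E_i$. It is again simple, since no loops or multiple edges are created. I will first show that $\n_\Gamma\simeq\bigoplus_i\n_{\Gamma_i}$. The basis $V\cup E$ of $\n_\Gamma$ is the disjoint union of the bases $V_i\cup E_i$. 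If $x\in V_i$ and $y\in V_j$ with $i\neq j$, then neither $(x,y)$ nor $(y,x)$ is an edge, so $[x,y]=0$. Brackets within a single $V_i$ agree with those of $\n_{\Gamma_i}$, and edges are central. Hence the obvious linear identification is a Lie algebra isomorphism.

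Next, let $Q$ be a quiver of length $2$. By Proposition~\ref{2step}, $\n_Q\simeq(\bigoplus_{i=1}^r\n_{Q_i})\oplus\mathfrak{a}$ with $Q_i$ of $(m_i,n_i)$-type and $\mathfrak{a}=\mathbb{R}^{\#E_0}$ abelian. Since $Q$ has length $2$, we have $r\geq1$, although this is not needed. For each $i$, Proposition~\ref{mn-type} gives the directed complete $(m_i,n_i)$ bipartite graph $\Gamma_i$ with $\n_{Q_i}\simeq\n_{\Gamma_i}$. For the abelian part, as in Remark~\ref{(1,0)}, I take $\#E_0$ isolated vertices, that is, the graph $(\{u_1,\ldots,u_{\#E_0}\},\emptyset)$. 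Its Lie algebra is abelian of dimension $\#E_0$, so it is isomorphic to $\mathfrak{a}$. I then relabel vertices so that all these graphs have pairwise disjoint vertex sets, set $\Gamma$ equal to their disjoint union, and apply the observation above to conclude $\n_Q\simeq\n_\Gamma$.

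I do not expect a real obstacle, since all the substantial work is already done in Proposition~\ref{2step}. The only points requiring care are the following. First, the disjoint-union graph must still be simple and directed, which is clear. Second, the bracket of two vertices from different components must vanish, which holds precisely because no edges join different components. Third, the degenerate cases must be covered: when $E_0=\emptyset$ the abelian summand is simply omitted, and when $r=0$ the graph consists only of isolated vertices.
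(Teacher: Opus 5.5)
Your proposal is correct and follows the same route as the paper: decompose $\mathfrak{n}_Q$ via Proposition~\ref{2step} (treating the abelian part as $(1,0)$-type pieces, i.e.\ isolated vertices), replace each $(m_i,n_i)$-type summand by the directed complete bipartite graph of Proposition~\ref{mn-type}, and take the disjoint union. The only difference is that you explicitly check that a disjoint union of simple directed graphs yields the direct sum of the associated Lie algebras, a step the paper takes for granted when it writes $\Gamma=\bigcup_i\Gamma_i$.
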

			\begin{proof}
				The nilpotent Lie algebra $\n_Q$ obtained by $Q$ is isomorphic to the direct sum of $\n_{Q_i}$, where $Q_i$ is the $(m_i, n_i)$-type quiver. Note that it could be $(m_i, n_i)=(1, 0)$ (see Remark~\ref{(1,0)}). Also, for each $Q_i$, there exists  a complete $(m_i, n_i)$ bipartite graph $\Gamma_i=(V_i, E_i)$ such that $\n_{Q_i}\simeq\n_{\Gamma_i}$. It holds $\n_Q\simeq\n_\Gamma$, where $\Gamma\coloneqq\bigcup_i \Gamma_i$. 
			\end{proof}
	\section{Pseudo-Riemannian  Ricci-flat metrics}
		In this section, we prove that  two-step nilpotent Lie algebras obtained by quivers always admits pseudo-Riemannian Ricci-flat metrics. Let $G$ be a simply-connected Lie group, $g$ be a left-invariant pseudo-Riemannian metric and $(\g, \langle, \rangle)$ be the Lie algebra with indefinite inner product corresponding to $(G, g)$. Curvatures of $(G, g)$ can be determined by the corresponding $(\g, \langle, \rangle)$. We state our result on metric Lie algebras $(\g, \langle, \rangle)$. We refer to \cite{CR} the definition of the Ricci curvature $\Ric$ with respect to a pseudo-Riemannian metircs

		\begin{Def}
			Let $\g$ be a Lie algebra, $\langle, \rangle$ an indefinite inner product on $\g$, and $\{e_i\}$ be a pseudo-orthonormal basis of $\g$, where $\langle e_i, e_i\rangle=\pm1$. Then, the pair $(p, q)$ is called the {\it signature} of $\langle, \rangle$ if $p=\#\{i\mid \langle e_i, e_i\rangle=1\}$ and $q=\#\{i\mid \langle e_i, e_i\rangle=-1\}$. 
		\end{Def}
		\begin{Def}
			Let $(\g, \langle, \rangle)$ be a Lie algebra with an indefinite inner product  and $\Ric: \g\to\g$ be the Ricci curvature. The metric $\langle, \rangle$ is said to be {\it pseudo-Riemannian Ricci-flat} if $\Ric=0$. 
		\end{Def}
		For pseudo-Riemannian Ricci-flat metrics on nilpotent Lie algebras obtained by graphs, the following theorem is known.  
		\begin{Thm}[\cite{CDR}]\label{CDR}
			Any two-step nilpotent Lie algebra obtained by a graph admits a pseudo-Riemannian Ricci-flat metric. 
		\end{Thm}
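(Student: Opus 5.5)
The plan is to put an explicit metric on $\n_\Gamma$ that is adapted to the canonical basis $V\cup E$, and then reduce Ricci-flatness to combinatorial conditions on $\Gamma$. Two reductions come first. If $\n=\n_1\oplus\n_2$ is a direct sum of ideals and each $\n_k$ carries a Ricci-flat metric, then the orthogonal sum of these metrics is Ricci-flat, because the curvature splits. Also, any metric on an abelian Lie algebra is flat. So it is enough to treat connected graphs with at least one edge, and to track every graph-dependent choice component by component. For $X,Y\in\n$ and a pseudo-orthonormal basis $\{e_i\}$ with $\varepsilon_i=\langle e_i,e_i\rangle$, I will use the Ricci formula for nilpotent metric Lie algebras (the Killing form and the mean-curvature term vanish):
\[
\Ric(X,Y)=-\tfrac12\sum_i\varepsilon_i\langle[X,e_i],[Y,e_i]\rangle+\tfrac14\sum_{i,j}\varepsilon_i\varepsilon_j\langle[e_i,e_j],X\rangle\langle[e_i,e_j],Y\rangle .
\]

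A first test shows that diagonal metrics cannot work. Take $\langle v,v\rangle=\varepsilon_v$ and $\langle e,e\rangle=\varepsilon_e$. The basis is nice, so $\Ric$ is diagonal. For an edge $e=(v,w)$ the second term gives $\Ric(e,e)=\tfrac12\varepsilon_v\varepsilon_w\neq0$, whatever signs are chosen. So edges must not be orthogonal to everything else in a diagonal way, and the metric has to pair basis vectors.

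I would therefore use $\sigma$-diagonal metrics in the style of Conti--del Barco--Rossi. Choose an involution $\sigma$ of the basis $V\cup E$ and signs $\varepsilon$, and set $\langle x,\sigma(x)\rangle=\varepsilon_x$ for basis vectors $x$, with all other pairings zero. Since the bracket is given by the incidence of $\Gamma$, every term in the formula above becomes a signed count of configurations: pairs of edges $e,e'$ whose endpoints are related by $\sigma$, and whose images $\sigma(e),\sigma(e')$ are themselves edges or vertices. The choice I would try first makes every edge $\sigma$-paired with either another edge or a vertex, arranged so that each product $\langle[e_i,e_j],X\rangle\langle[e_i,e_j],Y\rangle$ is either zero or cancels against a partner term of opposite sign. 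Concretely, I would fix a suitable involution on $V$ (for example, swapping the two ends of a matching, or the two colour classes in the bipartite case relevant to Theorem~\ref{quiver_graph}) and extend it to $E$ compatibly. Then I would check the resulting vertex, edge, and mixed components of $\Ric$ separately.

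The main obstacle is choosing $\sigma$ and the signs for an arbitrary graph. Simple options, such as making $\mathrm{span}\,E$ totally isotropic, fail as soon as $\#E>\#V$; for instance $K_{m,n}$ has $mn>m+n$ once $m,n\ge3$. So there is no uniform null-pairing of edges with vertices. If a single uniform $\sigma$ cannot be found, I would fall back on an inductive construction: add vertices and edges one at a time. At each step I would modify the metric only on the newly created basis vectors and on their $\sigma$-partners, and show that the new Ricci contributions can be cancelled by a choice of sign. Verifying that this cancellation is always possible, especially for vertices of high degree, is where I expect the real work to lie.
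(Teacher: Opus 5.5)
The paper gives no proof of this statement: it quotes it from Conti--del Barco--Rossi and uses it as a black box. Judged on its own, your proposal has a genuine gap. The preliminary steps are correct: orthogonal sums of Ricci-flat ideals are Ricci-flat, your nilpotent Ricci formula is right, and $\Ric(e,e)=\tfrac12\varepsilon_v\varepsilon_w\neq0$ does rule out diagonal metrics. But the actual content of the theorem is never supplied, namely an explicit involution $\sigma$ with signs, together with a check that $\Ric=0$. You call this ``the main obstacle'' and defer to an induction that is not carried out.

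That induction would not work as stated. Adding an edge $(v,w)$ makes $[x_v,x_w]\neq0$. This changes $\Ric(x_v,\cdot)$ and $\Ric(x_w,\cdot)$, and, through the second sum, entries involving the already-chosen partners of $x_v,x_w$. Modifying the metric only on the new vectors does not control these, and a lone nonzero term cannot be cancelled by flipping a sign. The concrete choice you sketch also fails if `compatibly' means that $\sigma$ preserves $V$ and sends the edge joining $v,w$ to the edge joining $\sigma(v),\sigma(w)$. In that case $\Ric(x_e,x_{\sigma(e)})$ reduces to one nonzero contribution with nothing to cancel it, exactly as in your diagonal test, which is the case $\sigma=\mathrm{id}$.

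The missing idea is that the edge/vertex count you saw as an obstruction tells you which subspace to make null. Work on a connected component $\Gamma_i=(V_i,E_i)$, so $\#E_i\geq\#V_i-1$. For a $\sigma$-diagonal metric $\langle x_a,x_{\sigma(a)}\rangle=g_a$, use the metric-dual basis $\tilde x_a=g_a^{-1}x_{\sigma(a)}$:
\[
\Ric(X,Y)=-\tfrac12\sum_a\langle[X,x_a],[Y,\tilde x_a]\rangle+\tfrac14\sum_{a,b}\langle[x_a,x_b],X\rangle\langle[\tilde x_a,\tilde x_b],Y\rangle .
\]
\emph{Case $\#E_i\geq\#V_i$.} Let $\sigma$ swap each vertex with a distinct edge and fix the remaining edges. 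A nonzero term in either sum needs a vertex $a$ with $\sigma(a)\in V_i$, and there is none. Hence $\Ric=0$; equivalently, $\mathrm{span}\,E_i$ is coisotropic.

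\emph{Case $\#E_i=\#V_i-1$} (a tree, possibly a single vertex). Let $\sigma$ swap each edge with a distinct vertex and fix the one leftover vertex $w$. The first sum vanishes because $[\n,\n]=\mathrm{span}\,E_i$ is totally isotropic. The second sum needs adjacent vertices $a\neq b$ with $\sigma(a),\sigma(b)\in V_i$, but only $w$ has this property.

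Taking the orthogonal sum over components proves the theorem. In particular, $K_{m,n}$ with $mn>m+n$ is handled by making $\mathrm{span}\,V$, not $\mathrm{span}\,E$, totally isotropic.
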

		By applying this theorem, for two-step nilpotent Lie algebras obtained by finite quivers without cycles, we obtain the following theorem. 
		\begin{Thm}
			Let $Q$ be a quiver of length $2$ and $\n_Q$ be the two-step nilpotent Lie algebra obtained by $Q$. Then, $\n_Q$ always admits a pseudo-Riemannian Ricci-flat metric.
		\end{Thm}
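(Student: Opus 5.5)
The plan is to combine Theorem~\ref{quiver_graph} with Theorem~\ref{CDR}, and then check that the conclusion carries over along the isomorphism. Let $Q$ be a quiver of length $2$. By Theorem~\ref{quiver_graph}, there is a simple directed graph $\Gamma$ and a Lie algebra isomorphism $\varphi\colon \n_Q\to\n_\Gamma$. Concretely, $\Gamma$ is the disjoint union of the directed complete $(m_i,n_i)$ bipartite graphs from Proposition~\ref{2step}, Proposition~\ref{mn-type} and Remark~\ref{(1,0)}. Since $Q$ has length $2$, at least one $Q_i$ has $n_i\geq1$, so $\n_\Gamma\simeq\n_Q$ is genuinely two-step nilpotent. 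Theorem~\ref{CDR} therefore applies and gives an indefinite inner product $\langle,\rangle_\Gamma$ on $\n_\Gamma$ with $\Ric=0$.

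Next, pull the metric back by setting $\langle x,y\rangle\coloneqq\langle\varphi(x),\varphi(y)\rangle_\Gamma$ for $x,y\in\n_Q$. This is a nondegenerate symmetric bilinear form of the same signature. The key observation is that the Levi-Civita connection of a left-invariant metric, given by the Koszul formula
\[
2\langle\nabla_xy,z\rangle=\langle[x,y],z\rangle-\langle[y,z],x\rangle+\langle[z,x],y\rangle,
\]
is built only from the bracket and the inner product. Hence $\varphi$ intertwines the connections, $\varphi(\nabla_xy)=\nabla^\Gamma_{\varphi x}\varphi y$, and likewise the curvature tensors. The Ricci tensor is a trace of the curvature, so $\Ric_{\n_Q}=\varphi^{-1}\circ\Ric_{\n_\Gamma}\circ\varphi=0$. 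Thus $(\n_Q,\langle,\rangle)$ is pseudo-Riemannian Ricci-flat.

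The only point needing care is that Theorem~\ref{CDR} should cover graphs that are disconnected or have isolated vertices, since the $(1,0)$-type components and the abelian factor $\mathfrak a$ contribute such vertices. If the cited result were stated only for connected graphs, I would handle this directly. A Ricci-flat pseudo-Riemannian metric on each two-step summand $\n_{\Gamma_i}$ and any indefinite or definite flat metric on the abelian summand combine into an orthogonal direct sum metric on the Lie algebra direct sum. This direct sum metric is again Ricci-flat, because the Levi-Civita connection and the curvature split along orthogonal ideals.
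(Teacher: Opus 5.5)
Your proposal is correct and follows the paper's proof exactly: obtain a graph $\Gamma$ with $\n_Q\simeq\n_\Gamma$ from Theorem~\ref{quiver_graph}, apply Theorem~\ref{CDR} to $\n_\Gamma$, and transfer the Ricci-flat metric back along the isomorphism. Your additional checks are sound but only make explicit what the paper leaves implicit: the pullback argument via the Koszul formula, the observation that $\n_\Gamma$ is genuinely two-step, and the orthogonal direct-sum fallback in case Theorem~\ref{CDR} required connected graphs.
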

		\begin{proof}
			There exists a directed simple graph $\Gamma$ such that $\n_Q\simeq \n_\Gamma$ by Theorem~\ref{quiver_graph}. The Lie algebra $\n_\Gamma$ admits a pseudo-Riemannian Ricci-flat metric by Theorem \ref{CDR}. Therefore, $\n_Q$ admits a such metric.  
		\end{proof}
		\begin{Rem}
			Let $Q$ be the $(m, n)$-type quiver. Then, the signature $(p, q)$ of the pseudo-Riemannian Ricci-flat metric obtained in this theorem satisfies 
			\[
				p, q\geq m+n-1. 
			\]
		\end{Rem}
	\section{Symplectic structures}\label{symplectic}
		In this section, we prove the condition for two-step nilpotent Lie algebras obtained by quivers to admit a symplectic structure. Let $G$ be a simply-connected Lie group with dimension $2n$ and $\g$ be its corresponding Lie algebra. A left-invariant $2$-form $\omega$ is called a {\it symplectic structure} if $d\omega=0$ and $\omega^n\neq0$. We define a symplectic structure on $\g$ and state our result on $\g$. 
		\begin{Def}
			Let $g$ be a Lie algebra. A $2$-form $\omega$ is called a {\it symplectic structure} on $\g$ if it satisfies $d\omega=0$ and $\omega^n\neq 0$. 
		\end{Def}
		For symplectic structures on nilpotent Lie algebras obtained by graphs, the following theorem is known. 
		\begin{Thm}[\cite{PT}]\label{PT}
			Let $\Gamma=(V, E)$ be a directed simple graph and $\n_\Gamma$ be the two-step nilpotent Lie algebra obtained by $\Gamma$. Then, $\n_\Gamma$ admits a symplectic structure if and only if the dimension of $\n_\Gamma$ is even, and in each connected component of $\Gamma$, the number of edges does not exceed the number of vertices. 
		\end{Thm}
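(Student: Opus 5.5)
We plan to prove Theorem~\ref{PT} by reducing everything to a linear-algebra description of closed $2$-forms on $\n_\Gamma$. Throughout we take a simple directed graph to have at most one edge between any two vertices, as in \cite{DM}. Take the dual basis $\{v^*\}_{v\in V}\cup\{e^*\}_{e\in E}$. Since $E$ spans the derived algebra, we have $dv^*=0$, and for $e=(s,t)$ we have $de^*=\pm s^*\wedge t^*$. Write a general $2$-form as
\[
\omega=\sum a_{uv}\,u^*\wedge v^*+\sum_{v,e} b_{v,e}\,v^*\wedge e^*+\sum c_{e,f}\,e^*\wedge f^*.
\]
The plan is to compute $d\omega$, sort it by type ($VVV$ or $VVE$), and read off the closedness conditions.

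\textbf{Step 1: the $E\wedge E$ part vanishes.} The term $d(e^*\wedge f^*)=de^*\wedge f^*-e^*\wedge df^*$ contributes only to the $VVE$ component of $d\omega$. No other term contributes there. Distinct edges $e$ give linearly independent $2$-forms $de^*=\pm s^*\wedge t^*$, because the graph is simple. Hence the $VVE$ component vanishes only if all $c_{e,f}=0$.

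\textbf{Step 2: consequences of nondegeneracy and closedness.} Since $c=0$, the subspace $\mathrm{span}\,E$ is isotropic. Nondegeneracy then forces the map $E\to V^*$, $e\mapsto\beta_e:=\sum_v b_{v,e}v^*$, to be injective. The remaining closedness condition is
\[
\sum_e \beta_e\wedge s(e)^*\wedge t(e)^*=0.
\]
Now fix an edge $e=(s,t)$ in a connected component $C$ and a vertex $v\notin C$. The coefficient of $v^*\wedge s^*\wedge t^*$ gets contributions only from edges joining two of $v,s,t$. The only such edge is $e$ itself, since $v$ has no neighbours in $C$ and the graph is simple. Therefore $b_{v,e}=0$.

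\textbf{Step 3: necessity.} Step~2 shows that $\beta_e\in\mathrm{span}\{v^*: v\in C\}$ for every edge $e$ of $C$. Injectivity of $e\mapsto\beta_e$ restricted to the edges of $C$ then gives $\#E_C\le\#V_C$. Evenness of the dimension is automatic for a symplectic Lie algebra.

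\textbf{Step 4: sufficiency.} Assume $\dim\n_\Gamma$ is even and $\#E_C\le\#V_C$ for every component $C$. Then each component has at most one cycle. Hall's theorem, or directly orienting edges toward a root or around the unique cycle, gives an injection $\varphi\colon E\to V$ with $\varphi(e)$ an endpoint of $e$. Put $b_{v,e}=\delta_{v,\varphi(e)}$. Each term $\varphi(e)^*\wedge s^*\wedge t^*$ vanishes, so $d\omega=0$ for any choice of the $a_{uv}$, since $dv^*=0$.

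Let $W=V\setminus\varphi(E)$. Then $\#W=\#V-\#E\equiv\dim\n_\Gamma \pmod 2$, so $\#W$ is even, and we can choose $a$ to be a symplectic form on $\mathrm{span}\,W$. With respect to the splitting $E\oplus\varphi(E)\oplus W$, the Gram matrix of $\omega$ has $E$ paired isomorphically with $\varphi(E)$ and $\omega(E,E)=0$. Its restriction to $W$ is nondegenerate, and $\omega(E,W)=0$. A block-triangular determinant computation then gives $\omega^n\neq0$.

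The main obstacle is Step~1 together with the localization in Step~2. One has to make sure no cancellations occur between different edges, and this is exactly where simplicity of the graph enters. The remaining steps are routine.
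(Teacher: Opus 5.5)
The paper gives no proof of this statement. It is quoted from \cite{PT} and used only as a black box in the symplectic classification for quivers of length $2$. Your argument is a correct, self-contained proof, so the comparison is between a citation and a direct proof.

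\textbf{Necessity.} You first show that $\mathrm{span}\,E$ is isotropic for every closed $2$-form. This holds in any two-step nilpotent Lie algebra: for $z$ central, $d\omega(x,y,z)=-\omega([x,y],z)$. The linear independence of the $de^*$ just reflects that $E$ is a basis of $[\n_\Gamma,\n_\Gamma]$, so simplicity is not really what is used in Step~1. Simplicity and the component structure enter in your localization step. There, the coefficient of $v^*\wedge s^*\wedge t^*$ forces each $\beta_e$ to be supported on the component of $e$.

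\textbf{Sufficiency.} You write down $\omega=\sum_e \varphi(e)^*\wedge e^*+\omega_W$ explicitly from an edge-to-endpoint injection $\varphi$. The parity hypothesis is used globally, through $\#W=\#V-\#E$. This correctly covers the case where individual components are odd-dimensional, for example two $(1,n)$-type stars, each giving dimension $2n+1$. In that case $\omega$ cannot be a direct sum of symplectic forms on the components, and your $W$-block pairs vertices across components.

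\textbf{What each route buys.} Citing \cite{PT} keeps the paper short. Your route makes the result self-contained and shows exactly where each hypothesis enters. It also produces explicit symplectic forms on $\n_\Gamma$ for the complete bipartite graphs of types $(2,2)$, $(1,n)$, $(n,1)$, which the paper never writes down.

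\textbf{One wording fix.} In Steps~2--3, nondegeneracy gives, and the count $\#E_C\le\#V_C$ needs, that the covectors $\beta_e$ ($e\in E$) are linearly independent in $V^*$. Equivalently, the linear map $\mathrm{span}\,E\to V^*$ is injective. Injectivity of $e\mapsto\beta_e$ as a map of sets would not give the dimension bound, so you should phrase it as linear independence.
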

		By applying this theorem, for two-step nilpotent Lie algebras obtained by finite quivers without cycles, we obtain the following theorem. 
		\begin{Thm}
			Let $Q$ be a quiver of length $2$ and $\n_Q$ be a two-step nilpotent Lie algebra obtained by $Q$.  Then, $\n_Q$ admits a symplectic structure if and only if the following conditions hold: 
			\begin{enumerate}[(i)]
				\item the dimension of $\n_Q$ is even, 
				\item each connected component in $Q$ is one of the $(2, 2)$, $(1, n)$, $(n, 1)$ and $(1, 0)$-type quivers as follows. 
			\end{enumerate}
		\end{Thm}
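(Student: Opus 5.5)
The plan is to reduce to Theorem~\ref{PT} by way of Theorem~\ref{quiver_graph}. By Proposition~\ref{2step} and Remark~\ref{(1,0)}, $\n_Q\simeq\bigoplus_i \n_{Q_i}$, where each $Q_i$ is an $(m_i,n_i)$-type quiver with $m_i\geq 1$, $n_i\geq 0$. The $Q_i$ are exactly the connected components of $Q$: each $Q_i$ collects the arrows into and out of a middle vertex $v_i\in V'$, the arrows in $E_0$ are isolated, and the decomposition in that proof shows that arrows of different pieces share no vertex. I will need to be slightly careful with this last point. An arrow of $E_0$ cannot touch any $v_i$, but it could a priori share an endpoint with a source $w_{i,1}$ or a target $w_{i,2}$ of some $Q_i$. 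So I will either treat ``connected component'' in the sense of the decomposition of Proposition~\ref{2step}, reading the components as the pieces $Q_i$ and the single arrows of $E_0$, which is how the statement's list of types is meant, or note that the Lie algebra only sees this decomposition.

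By Proposition~\ref{mn-type}, each $\n_{Q_i}\simeq\n_{\Gamma_i}$, where $\Gamma_i$ is the directed complete $(m_i,n_i)$ bipartite graph. This graph is connected and has $m_i+n_i$ vertices and $m_in_i$ edges. A $(1,0)$ piece corresponds to a single vertex with no edges. So $\n_Q\simeq\n_\Gamma$ with $\Gamma=\bigsqcup_i\Gamma_i$, and the connected components of $\Gamma$ are exactly the $\Gamma_i$. Applying Theorem~\ref{PT}, $\n_Q$ is symplectic if and only if $\dim\n_Q$ is even and $m_in_i\leq m_i+n_i$ for every $i$.

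The main step is the elementary classification of pairs $(m,n)$ with $m\geq1$, $n\geq0$ and $mn\leq m+n$. I rewrite the inequality as $(m-1)(n-1)\leq 1$. If $m=1$, it holds for every $n\geq0$; this gives the $(1,n)$ types and includes $(1,0)$. If $n=1$, it holds for every $m$, giving the $(n,1)$ types. If $n=0$, then $mn=0$ and it always holds; here I must reconcile with the statement. A pair $(m,0)$ with $m\geq2$ never arises as one piece, because an $(m,0)$ configuration is just $m$ separate arrows, each of $(1,0)$-type. If $m,n\geq2$, then $(m-1)(n-1)\leq1$ forces $m=n=2$. This yields exactly the list $(2,2)$, $(1,n)$, $(n,1)$, $(1,0)$.

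Finally, I check that the dimension condition transfers: $\dim\n_Q=\dim\n_\Gamma=\sum_i(m_i+n_i+m_in_i)$ by Proposition~\ref{mn-type}, so condition (i) is literally the parity condition of Theorem~\ref{PT}. I expect the only real obstacle to be the bookkeeping of what ``connected component of $Q$'' means relative to the $(m_i,n_i)$-decomposition, in particular the $(1,0)$ pieces coming from $E_0$. The rest is a direct translation.
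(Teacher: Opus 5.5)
You follow the paper's route. You reduce to the graph $\Gamma=\bigsqcup_i\Gamma_i$ via Proposition~\ref{2step}, Remark~\ref{(1,0)} and Proposition~\ref{mn-type}, apply Theorem~\ref{PT}, and solve $mn\le m+n$. Your $(m-1)(n-1)\le 1$ is the paper's $1\le \frac{1}{m}+\frac{1}{n}$, and your remark that $(m,0)$ with $m\ge 2$ never occurs as a piece fills a step the paper skips. The graph side is fine, since the components of $\Gamma$ really are the $\Gamma_i$. The gap is on the quiver side, and it is larger than the bookkeeping point you flag.

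\textbf{The false step.} You claim that the pieces $Q_i$, together with the single arrows of $E_0$, are exactly the connected components of $Q$. This fails in more ways than the one you note (an $E_0$-arrow touching an outer vertex):
\begin{enumerate}[(a)]
\item two stars $Q_i,Q_j$ can share a source or a target, e.g.\ $w\to v_1\to z_1$ and $w\to v_2\to z_2$;
\item arrows of $E_0$ can share endpoints with each other;
\item inside one $Q_i$ the arrows into $v_i$ may have different sources, so $Q_i$ is an $(m_i,n_i)$-type quiver only after the outer vertices are identified as in the proof of Proposition~\ref{2step}.
\end{enumerate}

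\textbf{The statement, read literally, is false.} Take arrows $a\colon u\to v$, $b\colon v\to w$, $c\colon u\to w$. This quiver is acyclic, of length $2$, connected, and of none of the listed types. Yet $\n_Q=\mathrm{span}\{a,b,c,ab\}$ with only $[a,b]=ab$ nonzero. So $\n_Q$ is the $3$-dimensional Heisenberg algebra plus $\mathbb{R}$, which is $4$-dimensional and symplectic: $\omega=a^*\wedge(ab)^*+b^*\wedge c^*$ is closed and $\omega\wedge\omega\neq 0$.

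\textbf{What you must do.} Your second option, ``note that the Lie algebra only sees this decomposition,'' cannot prove the statement as written; it is exactly why the literal statement fails. You must commit to your first option and state it as a correction: condition (ii) refers to the pieces of Proposition~\ref{2step}. These are the star of arrows at each vertex having both incoming and outgoing arrows, with its outer vertices identified, and each remaining arrow taken separately. With that reading your argument is complete and coincides with the paper's. The paper's proof makes the same identification without justification (``each connected component of $Q$ is isomorphic to $(m_i,n_i)$-type quiver $Q_i$''), so the correction applies there too.
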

		\noindent
		\begin{minipage}[t]{.49\linewidth}
			\begin{figure}[H]
				\begin{center}
					\includegraphics[scale=0.7]{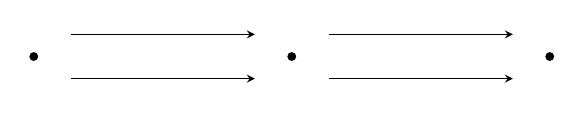}
					\caption{$(2, 2)$-type}
					\label{symplectic1}
				\end{center}
			\end{figure}
		\end{minipage}
		\begin{minipage}[t]{.49\linewidth}
			\begin{figure}[H]
				\begin{center}
					\includegraphics[scale=0.7]{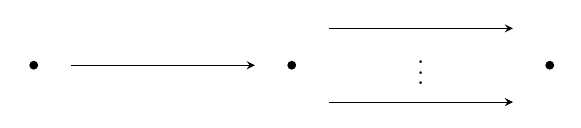}
					\caption{$(1, n)$-type}
					\label{symplectic2}
				\end{center}
			\end{figure}
		\end{minipage}\\
		\begin{minipage}[t]{.49\linewidth}
			\begin{figure}[H]
				\begin{center}
					\includegraphics[scale=0.7]{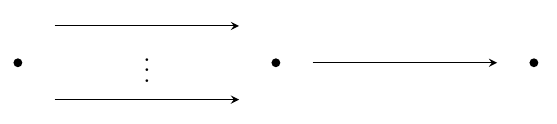}
					\caption{$(n, 1)$-type}
					\label{symplectic3}
				\end{center}
			\end{figure}
		\end{minipage}
		\begin{minipage}[t]{.49\linewidth}
			\begin{figure}[H]
				\begin{center}
					\includegraphics[scale=0.7]{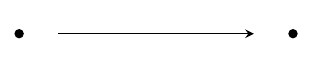}
					\caption{$(1, 0)$-type}
					\label{symplectic4}
				\end{center}
			\end{figure}
		\end{minipage}
		\begin{proof}
			There exists a directed simple graph $\Gamma=(V, E)$ such that $\n_Q\simeq \n_\Gamma$ by Theorem~\ref{quiver_graph}. Therefore, the dimension of $\n_Q$ is even if and only if the dimension of $\n_\Gamma$ is even. By Theorem \ref{PT}, it is enough to show that each connected component $\Gamma_i=(V_i, E_i)$ of $\Gamma$ satisfies $\#E_i\leq\#V_i$ if and only if each connected quiver $Q_i$ in $Q$ is one of the above. By Proposition~\ref{2step}, each connected component of $Q$ is isomorphic to $(m_i, n_i)$-type quiver $Q_i$. Moreover, by Proposition~\ref{mn-type}, there exists a complete $(m_i, n_i)$ bipartite graph $\Gamma_i=(V_i, E_i)$ such that $\n_{Q_i}\simeq\n_{\Gamma_i}$. In particular, $\#V_i=m_i+n_i$ and $\#E_i=m_in_i$. 
			In the case of $m_i\neq0$ and $n_i\neq0$, we have
			\[
				\#E_i\leq\#V_i\Leftrightarrow m_in_i\leq m_i+n_i\Leftrightarrow 1\leq \frac{1}{m_i}+\frac{1}{n_i}. 
			\]
			This inequality holds if and only if $(m_i, n_i)=(2, 2), (1, n), (n, 1)$. In the case of $(m_i, n_i)=(1, 0)$, it satisfies $\#E_i\leq\#V_i$. This complete the proof. 			
		\end{proof}		
		The two-step nilpotent Lie algebras obtained by the following quivers in Figures~\ref{symplectic_example} and \ref{symplectic_example2} are 14-dimensional and 12-dimensional, respectively. Therefore, both are even dimensional. However, neither satisfies the condition for connected components. Therefore, they do not have symplectic structures.   
		\begin{figure}[H]
			\begin{center}
				\includegraphics[scale=0.8]{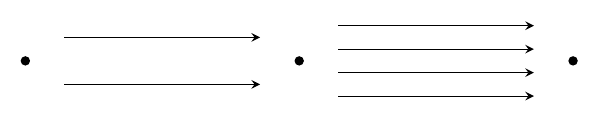}
				\caption{The $(2, 4)$-type quiver}
				\label{symplectic_example}
			\end{center}
		\end{figure}
		\begin{figure}[H]
			\begin{center}
				\includegraphics[scale=0.8]{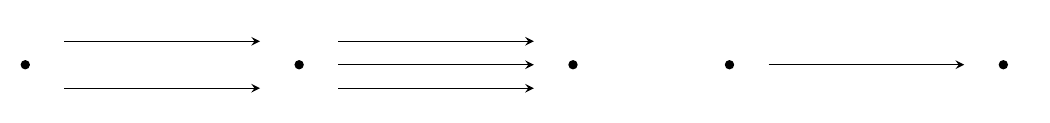}
				\caption{The disjoint union of the $(2, 3)$-type quiver and the $(1, 0)$-type quiver}
				\label{symplectic_example2}
			\end{center}
		\end{figure}
	\section*{Acknowledgements}
		The author wishes to extend her heartfelt thanks to Professors Ali Baklouti and Hideyuki Ishi for organizing the 7th Tunisian-Japanese Conference on "Geometric and Harmonic Analysis on Homogeneous Spaces and Applications." The author is also grateful to Hiroshi Tamaru for his valuable discussions.

\end{document}